\newtheorem{theorem}{Theorem}
\newtheorem{lemma}{Lemma}
\newtheorem*{proof}{Proof}
\title[A nonlocal fractional Helmholtz equation]{A nonlocal fractional Helmholtz equation}
\begin{document}

\author{Mokhtar Kirane, Batirkhan Kh. Turmetov, Berikbol T. Torebek}

\address{Mokhtar Kirane\\
LaSIE, Facult\'{e} des Sciences, P\^{o}le Sciences et Technologies, Universit\'{e} de La Rochelle, Avenue M. Crepeau, 17042 La Rochelle Cedex, France}
\email{{\tt mkirane@@univ-lr.fr}}

\address {Batirkhan Kh. Turmetov \\
Akhmet Yasawi University,
161200, Sattarkhanov st., 29, Turkestan, Kazakhstan} \email{{\tt
batirkhan.turmetov@@ayu.edu.kz}}

\address{Berikbol T. Torebek\\
Institute of Mathematics and Mathematical Modeling, 050010,
Pushkin st., 125, Almaty, Kazakhstan} \email{{\tt
torebek@@math.kz}}

\maketitle
\begin{abstract}
In this paper we study some boundary value problems for a fractional
analogue of second order elliptic equation with an involution
perturbation in a rectangular domain. Theorems on existence and
uniqueness of a solution of the considered problems are proved by
spectral method.

\vskip 0.3cm \noindent {\it AMS 2000 Mathematics Subject
Classifications} :
Primary 35R30; 35K05; 35K20.\\
{\it Key words}: Caputo operator, Helmholtz equation, involution,
fractional differential equation, Mittag-Leffer function, boundary
value problem.
\end{abstract}
\section{Introduction}

The paper is concerned with four boundary value problems
concerning the fractional analogue of Helmholtz equation with a
perturbation term of involution type in the space variable. We
obtain for them existence and uniqueness results based on the
Fourier method.

To describe the problems, let $\Omega  = \left\{ {(x,y) \in R^2: 0 < x < 1, {\rm{  - \pi}} < y < \pi} \right\}.$
We consider the equation
\begin{multline}\label{1.1}L_x^{2\alpha } u(x,y) + u_{yy} (x,y)
\\- \varepsilon u_{yy} \left( {x, -y} \right) - c^2 u(x,y) =
0,\left( {x,y} \right) \in \Omega ,\end{multline} where
$c,\varepsilon $ are real numbers, $L_x^{2\alpha }= D_x^\alpha D_x^\alpha,$
where $$D_x^\alpha  u\left( {x,y} \right) = \frac{1}{{\Gamma \left(
{1 - \alpha } \right)}}\int\limits_0^x {\left( {x - s} \right)^{ -
\alpha } \frac{{\partial u}}{{\partial s}}\left( {s,y} \right)}
ds$$ is the Caputo derivative of order $\alpha  \in
\left( {0,1} \right]$ of $u$ with respect to $x$ \cite{Kilbas2006}.

Regular solution of Equation \eqref{1.1} is a function $u \in
C\left( {\bar \Omega } \right),$ such that $D_x^\alpha
u,D_x^{2\alpha } u,u_{yy}  \in C\left( \Omega  \right).$

Since for $\alpha  = 1:$ $$L_x^2
+ \frac{{\partial ^2 }}{{\partial y^2 }} = \frac{{\partial ^2
}}{{\partial x^2 }} + \frac{{\partial ^2 }}{{\partial y^2 }} =
\Delta.$$
Therefore, Equation \eqref{1.1} is a nonlocal generalization of the Helmholtz equation, which at $\varepsilon  = 0$ coincides with the Helmholtz equation.\\
{\bf Problem D.} {\it Find in the domain $\Omega$ a regular
solution of Equation \eqref{1.1}, satisfying the following
boundary value conditions:}
\begin{equation}\label{1.2}u\left( {0,y} \right) = \varphi \left( y \right),u\left( {1,y} \right) = \psi \left( y \right),{\rm{ }} - \pi  \le y \le \pi ,\end{equation}
$$u\left( {x, - \pi } \right) = u\left( {x,\pi } \right) = 0,{\rm{ 0}} \le x \le 1.$$
{\bf Problem N.} {\it Find in the domain $\Omega$ a regular
solution of Equation \eqref{1.1}, such that $u_y \left( {x,y}
\right) \in C\left( {\bar \Omega } \right)$ and satisfying
conditions \eqref{1.2} and:}
$$u_y \left( {x, - \pi } \right) = u_y \left( {x,\pi } \right) = 0,{\rm{ 0}} \le x \le 1.$$\\
{\bf Problem P.} {\it Find in the domain $\Omega$ a regular
solution of Equation \eqref{1.1}, such that $u_y \left( {x,y}
\right) \in C\left( {\bar \Omega } \right)$ and satisfying
conditions \eqref{1.2} and:}
$$u\left( {x, - \pi } \right) = u\left( {x,\pi } \right){\rm{,}}u_y \left( {x, - \pi } \right) = u_y \left( {x,\pi } \right){\rm{, 0}} \le x \le 1.$$
{\bf Problem AP.} {\it Find in the domain $\Omega$ a regular
solution of Equation \eqref{1.1}, such that $u_y \left( {x,y}
\right) \in C\left( {\bar \Omega } \right)$ and satisfying
conditions \eqref{1.2} and:}
$$u\left( {x, - \pi } \right) =  - u\left( {x,\pi } \right){\rm{,}}u_y \left( {x, - \pi } \right) =  - u_y \left( {x,\pi } \right){\rm{, 0}} \le x \le 1.$$
Here $\varphi \left( y \right),\psi \left( y \right)$ are given
sufficiently smooth functions.

Before we describe our results, let us dwell a while on the
existing literature concerning equations with involution.
Differential equations with modified arguments are equations in
which the unknown function and its derivatives are evaluated with
modifications of the time or space variables; such equations are
called in general functional-differential equations. Among such
equation, one can single, equations with involution; to describe
them, let $\Gamma$ be an open or a closed curve in the complex
plane or the plane of real variables $x$ and $y.$

The homeomorphism $$a^2(t)=a(a(t))=t,\,\,t\in\Gamma,$$ is called a
Carleman shift (involution) \cite{Carleman}.

Various problems for equations with involution were investigated in \cite{Andreev:1987},
\cite{Khromov2011}.\\
Note that problems D, N and P for Equation \eqref{1.1} at
$\varepsilon  = 0$ were studied in \cite{TurmetovTorebek:2014},
\cite{TurmetovTO:2014}. Some questions of solvability of boundary
value problems with fractional analogues of the Laplace operator
were studied in \cite{Masaeva:2012}, \cite{Yakubovich:2012}.

\section{Solution of One-Dimensional Equation with Fractional Derivative}

Let $\mu$ be a positive real number, $S = \{ t:0 < t < 1\} ,$
$\bar S = \{ t:0 \le t \le 1\} .$ We consider the problem
\begin{equation}\label{2.1}D^{2\alpha } y\left( t
\right) - \mu ^2 y\left( t \right) = 0,\,t\in S,\end{equation}
\begin{equation}\label{2.2}y(0) = a,y(1) = b,\end{equation} where $a,b$ are real numbers.

A solution of problem \eqref{2.1} - \eqref{2.2} is the function
$y \in C(\bar S),$ such that  $D^\alpha
y \in C(\bar S),$ $D^{2\alpha } y \in C\left( S \right).$\\
\begin{lemma}\label{lem1} {\it The solution of problem \eqref{2.1} -
\eqref{2.2} exists, is unique and it can be written in the form
\begin{equation}\label{1*}y(t) = aC(\mu t) + bS(\mu t),\end{equation} where
\begin{equation}\label{2.3}C\left( {\mu t} \right) =
\frac{{E_{\alpha ,1} \left( \mu  \right)E_{\alpha ,1} \left( { -
\mu t^\alpha  } \right) - E_{\alpha ,1} \left( { - \mu }
\right)E_{\alpha ,1} \left( {\mu t^\alpha  } \right)}}{{2\mu
E_{2\alpha ,\alpha  + 1} \left( {\mu ^2 } \right)}},\end{equation}
\begin{equation}\label{2.4}S\left( {\mu t} \right) =
\frac{{t^\alpha  E_{2\alpha ,\alpha  + 1} \left( {\mu ^2
t^{2\alpha } } \right)}}{{E_{2\alpha ,\alpha  + 1} \left( {\mu ^2
} \right)}}.\end{equation} Here $$E_{\alpha ,\beta } \left( z
\right) = \sum\limits_{k = 0}^\infty {\frac{{z^k }}{{\Gamma \left(
{\alpha k + \beta } \right)}}}$$ is the Mittag - Leffler type
function \cite{Kilbas2006}.}\end{lemma}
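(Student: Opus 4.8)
The plan is to solve the fractional ODE $D^{2\alpha}y = \mu^2 y$ explicitly by finding two linearly independent solutions and then fitting the boundary data. First I would look for solutions in the form of fractional power series. A natural candidate is $y_1(t) = E_{\alpha,1}(\mu t^\alpha)$: applying $D^\alpha$ to $t^{\alpha k}$ gives (by the standard Caputo formula $D^\alpha t^\beta = \frac{\Gamma(\beta+1)}{\Gamma(\beta+1-\alpha)}t^{\beta-\alpha}$ for $\beta \ge \alpha$, with the $k=0$ constant term killed) that $D^\alpha E_{\alpha,1}(\mu t^\alpha) = \mu E_{\alpha,1}(\mu t^\alpha)$ — at least formally — so applying it twice yields $\mu^2 E_{\alpha,1}(\mu t^\alpha)$. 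Replacing $\mu$ by $-\mu$ gives a second solution $y_2(t)=E_{\alpha,1}(-\mu t^\alpha)$. I would verify that $y_1,y_2$ lie in the required class $C(\bar S)$ with $D^\alpha y_i \in C(\bar S)$ and $D^{2\alpha}y_i \in C(S)$, using convergence of the Mittag-Leffler series on $[0,1]$.

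Next I would handle existence: the general solution should be $y(t) = A\,E_{\alpha,1}(\mu t^\alpha) + B\,E_{\alpha,1}(-\mu t^\alpha)$, and I would impose the two conditions $y(0)=a$, $y(1)=b$. Since $E_{\alpha,1}(0)=1$, the first condition forces $A+B=a$; the second gives $A\,E_{\alpha,1}(\mu) + B\,E_{\alpha,1}(-\mu) = b$. Solving this $2\times 2$ linear system for $A,B$ requires the determinant $E_{\alpha,1}(-\mu) - E_{\alpha,1}(\mu)$ to be nonzero; I expect this to be expressible, via a series manipulation grouping even and odd powers, as $-2\mu\, E_{2\alpha,\alpha+1}(\mu^2)$, which is strictly positive for $\mu>0$ because every term in that series is positive. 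Substituting $A,B$ back and regrouping the Mittag-Leffler series by parity (the even part of $A y_1 + B y_2$ contributing the $C(\mu t)$ term, the odd part the $S(\mu t)$ term) should produce exactly formulas \eqref{2.3}–\eqref{2.4}; in particular one checks $C(0)=1$, $S(0)=0$, $C(\mu)=0$, $S(\mu)=1$, so that $y(t)=aC(\mu t)+bS(\mu t)$ satisfies \eqref{2.2}.

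For uniqueness I would argue that any solution in the stated class is determined by its "initial data" at $t=0$. Writing the equation as $D^\alpha(D^\alpha y) = \mu^2 y$ and integrating, a solution is a fixed point of a Volterra-type integral operator; since $D^\alpha y \in C(\bar S)$ the value $(D^\alpha y)(0)$ is well defined, and $(y(0), (D^\alpha y)(0))$ determines $y$ uniquely via the corresponding fractional integral recursion (equivalently, the homogeneous problem with $a=b=0$ forces $A=B=0$ from the nonvanishing determinant). Alternatively, I would note that two solutions with the same boundary data have a difference $w$ solving \eqref{2.1} with $w(0)=w(1)=0$, and since $w = A E_{\alpha,1}(\mu t^\alpha) + B E_{\alpha,1}(-\mu t^\alpha)$ by the structure of the solution space, the nonsingular system above gives $A=B=0$.

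The main obstacle is the step establishing that $\{E_{\alpha,1}(\pm\mu t^\alpha)\}$ genuinely spans the full solution space of \eqref{2.1} in the prescribed function class — i.e., that there are no other solutions — since for $\alpha<1$ one cannot invoke classical ODE theory; this needs the Volterra/fixed-point reformulation together with the regularity assumptions $D^\alpha y, D^{2\alpha}y \in C$ to pin down the two free parameters. The identification of the determinant with $-2\mu E_{2\alpha,\alpha+1}(\mu^2)$ and the parity-regrouping that yields \eqref{2.3}–\eqref{2.4} are then routine series computations using $\Gamma(2\alpha k + \alpha + 1)$ bookkeeping.
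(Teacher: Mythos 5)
Your proposal follows essentially the same route as the paper: the general solution $D_1E_{\alpha,1}(-\mu t^{\alpha})+D_2E_{\alpha,1}(\mu t^{\alpha})$, the $2\times 2$ system from the boundary data, the identity $E_{\alpha,1}(\mu)-E_{\alpha,1}(-\mu)=2\mu E_{2\alpha,\alpha+1}(\mu^{2})$, and the parity regrouping that yields \eqref{2.3}--\eqref{2.4}. The one step you flag as the main obstacle --- that $E_{\alpha,1}(\pm\mu t^{\alpha})$ exhaust the solution space in the stated regularity class --- is precisely what the paper disposes of by citing \cite{TurmetovTorebek:2014}, so your sketched Volterra-type justification is an acceptable substitute for that citation.
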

\begin{proof} From \cite{TurmetovTorebek:2014} it is known that the general solution of equation \eqref{2.1} has the form \begin{equation}\label{2*}y\left( t \right) = D_1 E_{\alpha ,1} \left( { - \mu t^\alpha  } \right) + D_2 E_{\alpha ,1} \left( {\mu t^\alpha  } \right),\end{equation}
where $D_1 ,D_2 $ are arbitrary constants.

Substituting the function \eqref{2*} into the boundary conditions \eqref{2.2} for unknown coefficients $D_1$ and $D_2$ we get $$D_1  = \frac{{aE_{\alpha ,1} \left( \mu  \right) - aE_{\alpha ,1} \left( { - \mu } \right) + E_{\alpha ,1} \left( { - \mu } \right) - b}}{{E_{\alpha ,1} \left( \mu  \right) - E_{\alpha ,1} \left( { - \mu } \right)}},$$ $$D_2  = \frac{{b - aE_{\alpha ,1} \left( { - \mu } \right)}}{{E_{\alpha ,1} \left( \mu  \right) - E_{\alpha ,1} \left( { - \mu } \right)}}.$$
    Since $E_{\alpha ,1} \left( \mu  \right) - E_{\alpha ,1} \left( { - \mu } \right) = 2\mu E_{2\alpha ,\alpha  + 1} \left( {\mu ^2 } \right),$ after some transformations, the solution of the problem \eqref {2.1} - \eqref {2.2} can be reduced to the form \eqref {1*}. This proves the lemma.
\end{proof}
Furthermore, for any $0 < \alpha  < 1,$ consider the equation
\begin{equation}\label{3*}y''(t) - \mu D^{2 - \alpha } y(t) = 0, t\in S.\end{equation}
    The following statement is known (see \cite{Nakhushev}):
\begin{lemma}\label{lem1*} If the function $ y(t) \in C\left( \bar{S} \right)
\cap C^2 \left( S \right),y(t) \ne Const$ is a solution of
equation \eqref{3*}, then it can not attain its positive maximum
(negative minimum) within the segment $\bar{S}.$\end{lemma}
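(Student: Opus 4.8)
The plan is to argue by contradiction, in the spirit of Nakhushev's extremum principle for fractional operators. Suppose $y$ is a non-constant solution of \eqref{3*} whose maximum over $\bar S$ equals $M:=y(t_0)>0$. Since $y\in C(\bar S)$ the maximum is attained, so showing that such a $t_0$ cannot be an interior point is precisely the claim; the statement for a negative minimum then follows by applying the result to $-y$. Assume therefore $t_0\in S$. At an interior maximum the classical conditions give $y'(t_0)=0$ and $y''(t_0)\le 0$, while \eqref{3*} reads $y''(t_0)=\mu D^{2-\alpha}y(t_0)$. Everything reduces to determining the sign of $D^{2-\alpha}y(t_0)$ and checking that, for the sign of $\mu$ under which the principle is stated, it is incompatible with $y''(t_0)\le 0$.

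To find that sign, start from $D^{2-\alpha}y(t_0)=\frac{1}{\Gamma(\alpha)}\int_0^{t_0}(t_0-s)^{\alpha-1}y''(s)\,ds$ and integrate by parts twice. In the first step the boundary term at $s=t_0$ drops out because $(t_0-s)^{\alpha-1}y'(s)=O\big((t_0-s)^{\alpha}\big)\to0$ (this is where $y'(t_0)=0$ enters), leaving a term $-t_0^{\alpha-1}y'(0)$ and an integral with kernel $(t_0-s)^{\alpha-2}$. In the second step one takes $y(s)-y(t_0)$ as the antiderivative of $y'(s)$; again the boundary term at $s=t_0$ vanishes, since by Taylor's formula $(t_0-s)^{\alpha-2}(y(s)-y(t_0))=O\big((t_0-s)^{\alpha}\big)$, and one arrives at
$$D^{2-\alpha}y(t_0)=\frac{(1-\alpha)(2-\alpha)}{\Gamma(\alpha)}\int_0^{t_0}(t_0-s)^{\alpha-3}(y(s)-y(t_0))\,ds+B_0,$$
where $B_0$ gathers the contributions at $s=0$ (involving $y(0)$, and, in the Caputo reading, $y'(0)$ as well). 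Since $t_0$ is a maximum, $y(s)-y(t_0)\le0$ on $[0,t_0]$ and $(1-\alpha)(2-\alpha)>0$, so the integral is $\le 0$; reassembling $B_0$ — for the Riemann–Liouville form the $y'(0)$-contribution and part of the $y(0)$-contribution cancel against the lower-limit terms, leaving a strictly negative multiple of $y(t_0)=M$, while in general one uses $y(0)\le M$ together with $M>0$ — one concludes that $D^{2-\alpha}y(t_0)$ has a definite, in fact strictly signed, value. Plugging this into $y''(t_0)=\mu D^{2-\alpha}y(t_0)$ then contradicts $y''(t_0)\le 0$; in the borderline case one would get $y\equiv M$ on $[0,t_0]$, and a standard continuation argument propagates this to all of $\bar S$, against $y\ne\mathrm{Const}$.

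The step I expect to be the real work is the double integration by parts through the weak singularities $(t_0-s)^{\alpha-1}$ and $(t_0-s)^{\alpha-3}$: it must be performed on $[0,t_0-\varepsilon]$ and justified as $\varepsilon\to0$ using only $y\in C(\bar S)\cap C^2(S)$ and the local integrability of $y''$ near $0$ that is implicit in \eqref{3*} being meaningful, and the $s=0$ boundary terms must be tracked carefully so that the final sign genuinely lines up with that of $\mu$. In the Caputo convention of the paper there is also a shortcut that bypasses this entirely: since $D^{2-\alpha}y=I^{\alpha}y''$, equation \eqref{3*} becomes the homogeneous weakly singular Volterra equation $q=\mu I^{\alpha}q$ for $q:=y''$, whose only locally integrable solution is $q\equiv0$ (generalized Gronwall), so $y$ is affine and a non-constant affine function on $[0,1]$ attains its maximum and minimum only at the endpoints. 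The Nakhushev-style argument is nonetheless the one worth recording, since it is what adapts to the genuine equations with involution considered later.
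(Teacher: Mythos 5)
You should first be aware that the paper contains no proof of this lemma at all: it is imported verbatim from Nakhushev's book (``The following statement is known (see \cite{Nakhushev})''), so your argument has to stand entirely on its own --- and its central step does not close. The whole plan hinges on extracting from the equation a sign for $y''(t_0)$ that clashes with $y''(t_0)\le 0$ at an interior positive maximum, but your own bookkeeping produces the wrong sign for that. Carrying out exactly the double integration by parts you describe (in the Riemann--Liouville reading, where, as you say, the $y'(0)$- and $y(0)$-contributions cancel) one gets
\[
\Gamma(\alpha)\,D^{2-\alpha}y(t_0)\;=\;-(1-\alpha)\,t_0^{\alpha-2}M\;+\;(1-\alpha)(2-\alpha)\int_0^{t_0}(t_0-s)^{\alpha-3}\bigl(y(s)-M\bigr)\,ds\;\le\;-(1-\alpha)\,t_0^{\alpha-2}M\;<\;0 ,
\]
i.e.\ a strictly \emph{negative} quantity --- you even say so (``a strictly negative multiple of $y(t_0)=M$''). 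Since $\mu$ is positive in this paper (and must be: in the application $\mu_{2k-j,i}^{2}=\lambda_{2k-j,i}+c^{2}>0$), the equation \eqref{3*} then gives $y''(t_0)=\mu D^{2-\alpha}y(t_0)<0$, which is perfectly compatible with $t_0$ being a maximum; there is no contradiction. The hedge ``for the sign of $\mu$ under which the principle is stated'' is exactly where the proof is missing: the naive pointwise extremum principle works for $y''+\mu D^{2-\alpha}y=0$, not for \eqref{3*}. This is not a bookkeeping accident but structural: for derivative orders in $(1,2)$ the operator $D^{2-\alpha}$ behaves at a maximum like a second derivative (its sign there agrees with that of $y''$), unlike the order-$(0,1)$ case on which Nakhushev-- and Luchko--type principles rest. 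In the Caputo reading matters are worse, since the uncancelled boundary term $-t_0^{\alpha-1}y'(0)$ leaves $D^{2-\alpha}y(t_0)$ with no definite sign at all.

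Your ``shortcut'' is internally sound but proves the wrong statement. If $D^{2-\alpha}$ were Caputo, then indeed $y''=\mu I^{\alpha}y''$ forces $y''\equiv 0$, so every admissible solution is affine --- but that cannot be the intended reading: the paper invokes this lemma (together with Lemma~\ref{lem2}) to obtain Lemma~\ref{lem3} for the functions $C(\mu t)$ and $S(\mu t)$ of Lemma~\ref{lem1}, which are Mittag--Leffler combinations and certainly not affine. So the operator in \eqref{3*} must be understood in the Riemann--Liouville sense, where the shortcut is unavailable and, as explained above, the pointwise sign argument fails. A route that can work in that reading is to integrate the equation twice: $y''=\mu\frac{d^{2}}{dt^{2}}I^{\alpha}y$ gives $y(t)-\mu I^{\alpha}y(t)=A+Bt$, a Volterra equation of the second kind with positive weakly singular kernel, whose solutions are $y(t)=A\,E_{\alpha,1}(\mu t^{\alpha})+B\,t\,E_{\alpha,2}(\mu t^{\alpha})$; any genuine proof of the extremum statement has to exploit this representation (or the positivity of the resolvent kernel), rather than evaluating signs at the maximum point. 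As it stands, your main argument has a gap precisely at the step you flagged as ``the real work,'' and the fallback argument establishes a fact about a different operator.
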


\begin{lemma}\label{lem2} \cite{Kilbas2006} For $E_{\alpha ,\beta }
\left( z \right)$ as $\left| z \right| \to \infty$ the following
asymptotic estimation holds \begin{equation}\label{2.5}E_{\alpha
,\beta } (z) = \frac{1}{\alpha }z^{\frac{{\left( {1 - \beta }
\right)}}{\alpha }} e^{z^{\frac{1}{\alpha }} }  - \sum\limits_{k =
1}^p {\frac{{z^{ - k} }} {{\Gamma \left( {\beta  - \alpha k}
\right)}}}  + O\left( {\frac{1}{{\left| z \right|^{p + 1} }} }
\right),\end{equation} where $\left| {\arg z} \right| \le \rho _1
\pi ,\rho _1  \in \left( {\frac{\alpha }{2},\min \left\{ {1,\alpha
} \right\}} \right),\alpha \in \left( {0,2} \right),$ and for
$\arg z = \pi$ \begin{equation}\label{2.6}E_{\alpha ,\beta } (z) =
\frac{1}{{1 + \left| z \right|}},\left| z \right| \to
\infty.\end{equation}\end{lemma}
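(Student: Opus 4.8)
Since this is the classical asymptotics of the Mittag--Leffler function, set out with full details in \cite{Kilbas2006}, the plan is only to outline the Hankel-contour argument that yields it. First I would pass from the defining series to an integral representation. Substituting the Hankel formula
\[\frac{1}{\Gamma(s)}=\frac{1}{2\pi i}\int_{\mathcal{H}}e^{\tau}\,\tau^{-s}\,d\tau,\]
in which $\mathcal{H}$ comes in from $-\infty$ below the cut, encircles the origin counterclockwise, and returns to $-\infty$ above the cut, into every term of $E_{\alpha,\beta}(z)=\sum_{k\ge 0}z^{k}/\Gamma(\alpha k+\beta)$ and summing the resulting geometric series under the integral sign gives
\[E_{\alpha,\beta}(z)=\frac{1}{2\pi i}\int_{\mathcal{H}(\delta,\theta)}\frac{\tau^{\alpha-\beta}\,e^{\tau}}{\tau^{\alpha}-z}\,d\tau,\]
where $\mathcal{H}(\delta,\theta)$ is the contour formed by the rays $\arg\tau=\pm\theta$, $|\tau|\ge\delta$, closed off by the arc $|\tau|=\delta$, $|\arg\tau|\le\theta$, with $\pi\alpha/2<\theta<\min\{\pi,\pi\alpha\}$; the interchange of sum and integral is first made with $\delta>|z|^{1/\alpha}$ and the arc is afterwards shrunk to a fixed small radius.

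Next I would exploit that shrinking of the arc: as $\delta$ decreases past $|z|^{1/\alpha}$ the contour sweeps across the simple pole $\tau_{0}=z^{1/\alpha}$ of $\tau^{\alpha}-z$, and by the residue theorem this contributes
\[\mathop{\mathrm{Res}}\limits_{\tau=\tau_{0}}\frac{\tau^{\alpha-\beta}e^{\tau}}{\tau^{\alpha}-z}=\frac{\tau_{0}^{1-\beta}e^{\tau_{0}}}{\alpha}=\frac{1}{\alpha}z^{(1-\beta)/\alpha}e^{z^{1/\alpha}},\]
the exponential term of \eqref{2.5} (which is exponentially small, hence negligible against every power of $|z|$, precisely in the range $\pi\alpha/2<|\arg z|$). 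On the contour that remains I would insert the exact identity
\[\frac{1}{\tau^{\alpha}-z}=-\sum_{k=1}^{p}\frac{\tau^{\alpha(k-1)}}{z^{k}}+\frac{\tau^{\alpha p}}{z^{p}\,(\tau^{\alpha}-z)};\]
integrating the finite sum term by term and invoking the Hankel formula once more, now as $\frac{1}{2\pi i}\int_{\mathcal{H}}\tau^{\alpha k-\beta}e^{\tau}\,d\tau=1/\Gamma(\beta-\alpha k)$, reproduces exactly $-\sum_{k=1}^{p}z^{-k}/\Gamma(\beta-\alpha k)$, while the last summand contributes $O(|z|^{-p-1})$ once one uses $|\tau^{\alpha}-z|\ge c\,|z|$ on the contour (valid for $z$ in the sector) together with the absolute convergence of $\int_{\mathcal{H}}|\tau|^{\alpha p+\alpha-\beta}\,|e^{\tau}|\,|d\tau|$. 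This establishes \eqref{2.5}.

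For $\arg z=\pi$, i.e.\ $z$ real and negative, the contour may be taken so that $\tau_{0}=z^{1/\alpha}$ is never swept (equivalently, the residue term is then exponentially small and is simply absorbed into the error), so no exponential contribution survives; running the same expansion with $p=1$ and using $|\tau^{\alpha}-z|\ge|z|$ on $\mathcal{H}$ gives $E_{\alpha,\beta}(z)=O(1/|z|)$, which is the content of \eqref{2.6}. The step I expect to be the main obstacle is exactly the geometric bookkeeping in this deformation: one must verify that for \emph{every} admissible $z$ the angle $\theta$ can be chosen so that $\tau_{0}$, and no other root of $\tau^{\alpha}=z$, is treated correctly relative to the contour, and that the branch of $z^{1/\alpha}$ stays off the cut---it is these constraints that pin down the interval $\rho_{1}\in(\alpha/2,\min\{1,\alpha\})$---together with checking that the $O$-estimate for the remainder is uniform over the sector $|\arg z|\le\rho_{1}\pi$.
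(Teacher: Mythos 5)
The paper offers no proof of this lemma at all---it is quoted directly from the cited monograph of Kilbas, Srivastava and Trujillo---and your Hankel-contour outline (integral representation via term-by-term use of Hankel's formula, residue at $\tau_0=z^{1/\alpha}$ producing the exponential term, finite expansion of $(\tau^{\alpha}-z)^{-1}$ giving the algebraic terms, remainder bounded by $|\tau^{\alpha}-z|\ge c|z|$) is exactly the standard argument given in that source, so your approach coincides with the paper's (i.e.\ the reference's). The only point worth noting is that \eqref{2.6} as printed is a misstatement of the known estimate $\left| E_{\alpha,\beta}(z)\right| \le C\left( 1+|z|\right)^{-1}$ on the ray $\arg z=\pi$, and your derivation of $E_{\alpha,\beta}(z)=O\left( |z|^{-1}\right)$ there is precisely the correct content of that formula.
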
 From Lemmas \ref{lem1*} and
\ref{lem2} follows
\begin{lemma}\label{lem3} For any $t \in [0,1]$ the following
inequalities hold: $$0 \le S\left( {\mu t} \right),C\left( {\mu t}
\right) \le 1.$$\end{lemma}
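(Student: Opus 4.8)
The plan is to bound $S(\mu t)$ and $C(\mu t)$ separately, using only the power‑series definition of the Mittag--Leffler function for the elementary positivity and monotonicity facts, and invoking the maximum principle (Lemma \ref{lem1*}) together with the asymptotics (Lemma \ref{lem2}) exactly where a Mittag--Leffler function of \emph{negative} argument occurs.

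First I would record the auxiliary facts. For $z\ge 0$ every coefficient in $E_{2\alpha,\alpha+1}(z)=\sum_{k\ge 0}z^{k}/\Gamma(2\alpha k+\alpha+1)$ is positive, so $E_{2\alpha,\alpha+1}$ (and likewise $E_{\alpha,1}$ and $E_{\alpha,\alpha}$) is positive and nondecreasing on $[0,\infty)$; also $E_{\alpha,1}(\mu)-E_{\alpha,1}(-\mu)=2\mu E_{2\alpha,\alpha+1}(\mu^{2})>0$. For negative arguments I need that $E_{\alpha,1}(-s)$ and $E_{\alpha,\alpha}(-s)$ remain positive for all $s\ge 0$ when $\alpha\in(0,1]$ (equivalently, that $E_{\alpha,1}(-s)$ is nonincreasing). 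This is where Lemmas \ref{lem1*}--\ref{lem2} enter: $w(t)=E_{\alpha,1}(-\mu t^{\alpha})$ solves $D^{\alpha}w+\mu w=0$, $w(0)=1$, so its antiderivative $W(t)=tE_{\alpha,2}(-\mu t^{\alpha})$ satisfies an equation of the type \eqref{3*}; since $W(0)=0$, $W'(0)=1>0$, the maximum principle of Lemma \ref{lem1*} forbids an interior positive maximum, which forces $W$ to be nondecreasing and positive, hence $w=W'\ge 0$, and $w$ cannot vanish at an interior $t_{0}$ because there $D^{\alpha}w(t_{0})=-\mu w(t_{0})=0$ contradicts the minimum estimate; the sign of $E_{\alpha,1}$ far out, which is needed to run this on arbitrarily long intervals, is exactly formula \eqref{2.6} of Lemma \ref{lem2}. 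The same scheme applied to $E_{\alpha,\alpha}(-\mu t^{\alpha})=-\tfrac{t^{1-\alpha}}{\mu}w'(t)$ gives its positivity.

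With these in hand the two estimates are short. For $S$: by \eqref{2.4}, $S(\mu t)=t^{\alpha}E_{2\alpha,\alpha+1}(\mu^{2}t^{2\alpha})/E_{2\alpha,\alpha+1}(\mu^{2})\ge 0$, and since $t^{\alpha}\le 1$ and $\mu^{2}t^{2\alpha}\le\mu^{2}$ with $E_{2\alpha,\alpha+1}$ nondecreasing on $[0,\infty)$, the numerator is $\le E_{2\alpha,\alpha+1}(\mu^{2})$, so $S(\mu t)\le 1$. For $C$: by \eqref{2.3} the denominator is $E_{\alpha,1}(\mu)-E_{\alpha,1}(-\mu)>0$, so $C(\mu t)=N(t)/N(0)$ with $N(t)=E_{\alpha,1}(\mu)E_{\alpha,1}(-\mu t^{\alpha})-E_{\alpha,1}(-\mu)E_{\alpha,1}(\mu t^{\alpha})$; one checks $N(0)=E_{\alpha,1}(\mu)-E_{\alpha,1}(-\mu)$, $N(1)=0$, and, using $\frac{d}{dt}E_{\alpha,1}(\lambda t^{\alpha})=\lambda t^{\alpha-1}E_{\alpha,\alpha}(\lambda t^{\alpha})$, that $N'(t)=-\mu t^{\alpha-1}\bigl[E_{\alpha,1}(\mu)E_{\alpha,\alpha}(-\mu t^{\alpha})+E_{\alpha,1}(-\mu)E_{\alpha,\alpha}(\mu t^{\alpha})\bigr]\le 0$, every factor in the bracket being nonnegative. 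Hence $N$ decreases from $N(0)$ to $N(1)=0$ on $[0,1]$, so $0\le N(t)\le N(0)$ and therefore $0\le C(\mu t)\le 1$.

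The algebra for $S$ and the differentiation of $N$ are routine; the genuine obstacle is the input about Mittag--Leffler functions of negative argument — the positivity of $E_{\alpha,1}(-s)$ and of $E_{\alpha,\alpha}(-s)$ on $[0,\infty)$ — which must be obtained cleanly from Lemma \ref{lem1*} and the asymptotics \eqref{2.6}, uniformly for $\alpha$ in the whole range $(0,1]$.
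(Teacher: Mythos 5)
Your treatment of $S$ (positivity of the series coefficients plus monotonicity of $E_{2\alpha,\alpha+1}$ on $[0,\infty)$) and your reduction of the bound for $C$ to the nonnegativity of $E_{\alpha,1}(-s)$ and $E_{\alpha,\alpha}(-s)$ on $[0,\infty)$ are correct: the identity $N(0)=E_{\alpha,1}(\mu)-E_{\alpha,1}(-\mu)=2\mu E_{2\alpha,\alpha+1}(\mu^2)$, $N(1)=0$ and the formula for $N'(t)$ all check out. The genuine gap is exactly at the point you flag as the obstacle: your derivation of those positivity facts from Lemma \ref{lem1*}. The function $W(t)=tE_{\alpha,2}(-\mu t^{\alpha})$ does \emph{not} satisfy an equation of the type \eqref{3*}. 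Termwise computation gives $W''(t)=\sum_{k\ge 1}(-\mu)^{k}t^{\alpha k-1}/\Gamma(\alpha k)$ and $D^{2-\alpha}W(t)=I^{\alpha}W''(t)=\sum_{k\ge 1}(-\mu)^{k}t^{\alpha(k+1)-1}/\Gamma(\alpha(k+1))$, so that $W''(t)+\mu\,D^{2-\alpha}W(t)=-\mu\,t^{\alpha-1}/\Gamma(\alpha)$. Hence $W$ solves an \emph{inhomogeneous} equation in which, moreover, the fractional term carries the coefficient $-\mu<0$, while \eqref{3*} is homogeneous with coefficient $+\mu>0$ ($\mu$ is positive throughout Section 2). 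Lemma \ref{lem1*} therefore does not apply to $W$, and the asymptotic formula \eqref{2.6} cannot repair this: it only controls the size of $E_{\alpha,\beta}$ for large negative argument, not its sign on all of $[0,\infty)$. The sentence ``the same scheme applied to $E_{\alpha,\alpha}(-\mu t^{\alpha})$'' inherits the same defect, and is in any case a stronger claim (it amounts to monotonicity of $E_{\alpha,1}(-\mu t^{\alpha})$, not just its positivity), so as written the key input of your proof is not established.

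Everything else in your argument is sound, and it is worth noting that it is more explicit than the paper's, which offers no proof at all beyond the assertion that Lemma \ref{lem3} ``follows from Lemmas \ref{lem1*} and \ref{lem2}''; the intended route there is apparently to apply the extremum principle directly to $C(\mu t)$ and $S(\mu t)$ viewed as the solutions of \eqref{2.1}--\eqref{2.2} with data $(a,b)=(1,0)$ and $(a,b)=(0,1)$, so that the absence of interior positive maxima and negative minima traps both functions between their boundary values $0$ and $1$. To close your proof you should either give a correct maximum-principle argument adapted to the order-$\alpha$ relation $D^{\alpha}w=-\mu w$ satisfied by $w(t)=E_{\alpha,1}(-\mu t^{\alpha})$, or simply invoke the classical fact (Pollard; see also \cite{Kilbas2006}) that for $0<\alpha\le 1$ the functions $E_{\alpha,1}(-s)$ and $E_{\alpha,\alpha}(-s)$ are completely monotone, hence nonnegative, on $[0,\infty)$; with that citation your argument for $0\le S(\mu t),C(\mu t)\le 1$ is complete.
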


\section{Spectral properties of the perturbed Sturm-Liouville problem}

Application of the Fourier method for solving problems D, N, P, AP
leads to the spectral equation
\begin{equation}\label{3.1}Y''\left( y \right) -
\varepsilon Y''\left( { - y} \right) + \lambda Y\left( x \right) =
0, - \pi  < y < \pi ,\end{equation} supplemented with one of the
local \begin{equation}\label{3.2}Y\left( { - \pi }
\right) = Y\left( \pi  \right) = 0,\end{equation}
\begin{equation}\label{3.3}Y'\left( { - \pi } \right) = Y'\left(
\pi  \right) = 0,\end{equation} or nonlocal
\begin{equation}\label{3.4}Y\left( { - \pi } \right) = Y\left( \pi
\right),Y'\left( { - \pi } \right) = Y'\left( \pi
\right),\end{equation}
\begin{equation}\label{3.5}Y\left( { - \pi } \right) =
- Y\left( \pi  \right),Y'\left( { - \pi } \right) =  - Y'\left(
\pi  \right)\end{equation} boundary conditions.

The Sturm-Liouville problem for Equation \eqref{3.1} with one of
the boundary conditions \eqref{3.2}, \eqref{3.3}, \eqref{3.4},
\eqref{3.5} is self-adjoint so they have real eigenvalues, and
their corresponding eigenfunctions form a complete orthonormal
basis in $L_2 \left( { - \pi ,\pi } \right)$ \cite{Naimark}.

For further investigation of the problems under consideration, we
need to calculate the explicit form of the eigenvalues and
eigenfunctions.

For $\left| \varepsilon  \right| < 1$  problem \eqref{3.1},
\eqref{3.2} has the following eigenvalues:
$$\lambda _{2k - 1,1}^{}  = \left( {1 + \varepsilon } \right)k^2,\quad
\lambda _{2k,1}^{}  = \left( {1 - \varepsilon } \right)\left( {k -
\frac{1}{2}} \right)^2 ,k = 1,2,...,$$ with the corresponding
eigenfunctions \begin{equation}\label{3.6}Y_{2k - 1,1}^{}  = \sin
ky,\quad Y_{2k,1}^{}  = \cos \left( {k - \frac{1}{2}} \right)y,\,k
= 1,2,....\end{equation} Similarly, problem \eqref{3.1},
\eqref{3.3} has the eigenvalues
$$\lambda _{2k + 1,2}^{}  = \left( {1 + \varepsilon }
\right)\left( {k + \frac{1}{2}} \right)^2 , \quad\lambda
_{2k,2}^{}  = \left( {1 - \varepsilon } \right)k^2 ,k = 0,1,...,$$
with the corresponding eigenfunctions
\begin{equation}\label{3.7}Y_{2k + 1,2}^{}  = \sin \left( {k +
\frac{1}{2}} \right)y,Y_{2k,2}^{}  = \cos ky,\,k =
0,1,....\end{equation} The eigenvalues of problem \eqref{3.1},
\eqref{3.4} are $$\lambda _{2k - 1,3}^{}  = \left( {1 +
\varepsilon } \right)k^2 ,k = 1,2,...,\quad\lambda _{2k,3}^{}  =
\left( {1 - \varepsilon } \right)k^2 ,k = 0,1,...,$$ with the
corresponding eigenfunctions \begin{equation}\label{3.8}Y_{2k -
1,3}^{}  = \sin ky,k = 1,2,...,Y_{2k,3}^{}  = \cos ky,\,k =
0,1,....\end{equation} Problem \eqref{3.1}, \eqref{3.5} has the
following eigenvalues $$\lambda _{2k + 1,4}^{}  = \left( {1 +
\varepsilon } \right)\left( {k + \frac{1}{2}} \right)^2 ,
\quad\lambda _{2k,4}^{}  = \left( {1 - \varepsilon } \right)\left(
{k + \frac{1}{2}} \right)^2 ,k = 0,1,...,$$ and corresponding
eigenfunctions \begin{equation}\label{3.9}Y_{2k + 1,4}^{}  = \sin
\left( {k + \frac{1}{2}} \right)y,Y_{2k,4}^{}  = \cos \left( {k +
\frac{1}{2}} \right)y,\,k = 0,1,....\end{equation}
\begin{lemma}\label{lem4}
The systems of functions \eqref{3.6}, \eqref{3.7},
\eqref{3.8}, \eqref{3.9} are complete and orthonormal in $L_2
\left( { - \pi ,\pi } \right).$\end{lemma}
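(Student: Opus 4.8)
The plan is to establish completeness and orthonormality of each of the four systems \eqref{3.6}--\eqref{3.9} by reducing them to the classical trigonometric systems on $(-\pi,\pi)$. First I would recall that the standard Fourier system $\{1,\cos ky,\sin ky:k\ge 1\}$ (suitably normalized) is complete and orthonormal in $L_2(-\pi,\pi)$, and likewise that the systems $\{\sin(k-\tfrac12)y\}_{k\ge1}$, $\{\cos(k-\tfrac12)y\}_{k\ge1}$, and $\{\sin(k+\tfrac12)y,\cos(k+\tfrac12)y\}_{k\ge0}$ arising from half-integer frequencies are complete orthonormal systems in $L_2(-\pi,\pi)$; these are textbook facts (see \cite{Naimark}). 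The key structural observation is that each of our four systems is exactly one of these classical systems after relabeling: system \eqref{3.8} is literally $\{\sin ky\}_{k\ge1}\cup\{\cos ky\}_{k\ge0}$, the ordinary trigonometric basis; system \eqref{3.6} is $\{\sin ky\}_{k\ge1}\cup\{\cos(k-\tfrac12)y\}_{k\ge1}$; system \eqref{3.7} is $\{\cos ky\}_{k\ge0}\cup\{\sin(k+\tfrac12)y\}_{k\ge0}$; and system \eqref{3.9} is $\{\sin(k+\tfrac12)y\}_{k\ge0}\cup\{\cos(k+\tfrac12)y\}_{k\ge0}$.

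Next I would verify orthogonality directly. Within each system one must check three types of inner products over $(-\pi,\pi)$: sine-sine, cosine-cosine, and sine-cosine. The sine-cosine integrals vanish by parity (odd integrand on a symmetric interval), which handles all cross terms between the sine-family and the cosine-family in every one of the four systems. The sine-sine and cosine-cosine integrals are evaluated by the product-to-sum identities; for integer frequencies these are the familiar orthogonality relations, and for half-integer frequencies $k\mp\tfrac12$ one checks that $\int_{-\pi}^{\pi}\cos((k-\tfrac12)y)\cos((m-\tfrac12)y)\,dy = \pi\delta_{km}$ and similarly for sines, which follows because $(k-\tfrac12)\pm(m-\tfrac12)$ is an integer so the primitive $\sin$ or $\cos$ terms vanish at $\pm\pi$ unless the difference frequency is zero. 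The normalization constant is $1/\sqrt{\pi}$ for every non-constant function and $1/\sqrt{2\pi}$ for the constant function $\cos 0\cdot y=1$; if one wishes the eigenfunctions as written in \eqref{3.6}--\eqref{3.9} to be literally orthonormal one should insert these factors, or else state that the systems are orthogonal and become orthonormal after the evident scaling.

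For completeness I would argue as follows. Systems \eqref{3.6}, \eqref{3.7}, \eqref{3.9} are obtained from the full trigonometric system on the doubled interval $(-2\pi,2\pi)$ by restriction and splitting into even/odd parts, or more directly: a function $f\in L_2(-\pi,\pi)$ decomposes uniquely as $f=f_{\mathrm{even}}+f_{\mathrm{odd}}$, the odd part is expanded in the relevant sine subsystem and the even part in the relevant cosine subsystem, and completeness of each half reduces to completeness of $\{\sin ky\}$ (resp. $\{\cos ky\}$, resp. the half-integer analogues) on $(0,\pi)$, which is classical. Alternatively, and most cleanly, completeness is automatic from the theory already invoked in the paragraph preceding Lemma \ref{lem4}: each system consists of the eigenfunctions of a self-adjoint Sturm--Liouville problem \eqref{3.1} with one of the boundary conditions \eqref{3.2}--\eqref{3.5}, and by the spectral theorem for such operators (see \cite{Naimark}) the eigenfunctions form a complete orthogonal system in $L_2(-\pi,\pi)$; one then only needs to confirm that the lists \eqref{3.6}--\eqref{3.9} exhaust all eigenfunctions, i.e. that no eigenvalue was missed and each listed $\lambda$ has the stated eigenspace, which was the content of the computation preceding the lemma.

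The main obstacle is essentially bookkeeping rather than analysis: one must be careful that for $|\varepsilon|<1$ the two families inside each system (say $\lambda_{2k-1,\cdot}$ and $\lambda_{2k,\cdot}$) are genuinely the complete eigenvalue list — in particular that when $\varepsilon$ takes special values causing $(1+\varepsilon)k^2=(1-\varepsilon)m^2$ for some integers, the corresponding eigenspace is still spanned by the listed $\sin$ and $\cos$ functions and no extra independent eigenfunction appears, and that orthonormality still holds across the coincidence (it does, again by parity). The cleanest writeup is to lean on the self-adjointness statement already made, so that the proof of Lemma \ref{lem4} reduces to (i) the parity argument for cross-orthogonality, (ii) the elementary half-integer orthogonality integrals, and (iii) the remark that \eqref{3.6}--\eqref{3.9} are precisely the eigenfunction lists found above, hence complete by the spectral theorem.
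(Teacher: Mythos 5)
Your proposal is correct and its main completeness argument is essentially the paper's own: decompose $f\in L_2(-\pi,\pi)$ into even and odd parts on the symmetric interval and reduce to the classical (Moiseev-type) completeness of $\{\sin ky\}$ and of the half-integer cosine/sine systems on $(0,\pi)$, which is exactly how the paper proves completeness of \eqref{3.6} and indicates the other three cases. The extras you add — the explicit parity/product-to-sum orthogonality check and the remark that the functions in \eqref{3.6}--\eqref{3.9} need the factors $1/\sqrt{\pi}$ (resp.\ $1/\sqrt{2\pi}$ for the constant) to be literally orthonormal — are points the paper leaves implicit, and your alternative spectral-theorem route is only as solid as the paper's own unproved assertion that the involution-perturbed problem \eqref{3.1} is self-adjoint with exhaustively listed eigenfunctions, so the direct argument is the right one to lead with.
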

\begin{proof}We prove only the completeness of system \eqref{3.6} in $L_2(-\pi, \pi).$ We will prove
that from the equalities
$$\int\limits_{-\pi}^{\pi}f(y)sinky dy=0, k=1,2,…,$$ $$\int\limits_{-\pi}^{\pi}f(y)cos\left(k-\frac{1}{2}\right)y dy=0, k=1,2,…,$$ for $f\in L_2(-\pi, \pi),$ we should obtain $f(y)=0$ in $L_2(-\pi, \pi).$

Suppose that the second equation holds. We transform it as follows

$$0=\int\limits_{-\pi}^{\pi}f(y)cos\left(k-\frac{1}{2}\right)y dy=\int\limits_{0}^{\pi}(f(y)+f(-y))cos\left(k-\frac{1}{2}\right)y dy.$$
Then by the completeness of the system \cite{Moiseev} $\left\{cos\left(k-\frac{1}{2}\right)y\right\}_{k=1}^{\infty}$ in $L_2(-\pi, \pi)$ we have $f(y)=-f(-y),\,0<y<\pi.$

Similarly
$$0=\int\limits_{-\pi}^{\pi}f(y)sinky dy=\int\limits_{0}^{\pi}(f(y)-f(-y))sinky dy.$$
Then by the completeness of the system \cite{Moiseev} $\left\{sinky\right\}_{k=1}^{\infty}$ in $L_2(-\pi, \pi)$ we have $f(y)= f(-y),\,0<y<\pi.$ Then we obtain $f(y)=0$ in $L_2(0, \pi),$ and consequently $f(y)=0$ in $L_2(-\pi, \pi).$

The completeness of the systems \eqref{3.7},
\eqref{3.8} and \eqref{3.9} can be proved similarly.
\end{proof}

\section{Main results}

For the considered problems D, N, P, AP, the following theorems
hold.

Suppose $\varphi _{2k - j,i}  = \left( {\varphi ,Y_{2k - j,i} }
\right),\psi _{2k - j,i}  = \left( {\psi ,Y_{2k - j,i} } \right),\,j
= 0,1,\,i = 1,2,3,4$ are the Fourier coefficients of functions $\varphi
,\psi ,$ by system $Y_{2k - j,i},$ $\lambda _{2k - j,i}$ are the
corresponding eigenvalues, $\mu _{2k - j,i}^2  = \lambda _{2k -
j,i}  + c^2,$ and functions $C\left( {\mu _{2k - j,i} x} \right)
$ and $S\left( {\mu _{2k - j,i} x} \right)$ are defined by \eqref{2.3} and \eqref{2.4}.

\begin{theorem}\label{th1} Let $|\varepsilon | < 1,\,0 < \delta  < 1,$
$\varphi \left( y \right) \in C^{2 + \delta } \left[ { - \pi ,\pi
} \right],$ $\psi \left( y \right) \in C^{1 + \delta } \left[ { -
\pi ,\pi } \right]$ and $\varphi \left( { - \pi } \right) =
\varphi \left( \pi  \right) = 0,$ $\psi \left( { - \pi } \right) =
\psi \left( \pi  \right) = 0.$ Then the solution of the problem D
exists, is unique and it can be written in the form \begin{multline*}u\left(
{x,y} \right) = \sum\limits_{k = 1}^\infty  {\left[ {\varphi _{2k
- 1,1} C\left( {\mu _{\mu _{2k - 1,1} } x} \right) + \psi _{2k -
1,1} S\left( {\mu _{\mu _{2k - 1,1} } x} \right)} \right]} Y_{2k -
1,1} \left( y \right)\\+ \sum\limits_{k = 1}^\infty  {\left[
{\varphi _{2k,1} C\left( {\mu _{\mu _{2k,1} } x} \right) + \psi
_{2k,1} S\left( {\mu _{\mu _{2k,1} } x} \right)} \right]} Y_{2k,1}
\left( y \right).\end{multline*}\end{theorem}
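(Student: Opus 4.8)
The strategy is the classical Fourier (separation of variables) method, using the spectral decomposition of the perturbed Sturm--Liouville problem \eqref{3.1}, \eqref{3.2} established in Section 3. First I would seek the solution as a series
\[
u(x,y)=\sum_{k=1}^\infty u_{2k-1,1}(x)\,Y_{2k-1,1}(y)+\sum_{k=1}^\infty u_{2k,1}(x)\,Y_{2k,1}(y),
\]
expanding along the orthonormal basis \eqref{3.6}. Substituting into \eqref{1.1} and using that $Y_{m,1}$ is an eigenfunction of the operator $Y\mapsto Y''(y)-\varepsilon Y''(-y)$ with eigenvalue $-\lambda_{m,1}$, each coefficient $u_{m,1}(x)$ must solve the one-dimensional fractional boundary value problem
\[
D_x^{2\alpha}u_{m,1}(x)-\mu_{m,1}^2\,u_{m,1}(x)=0,\qquad u_{m,1}(0)=\varphi_{m,1},\ u_{m,1}(1)=\psi_{m,1},
\]
with $\mu_{m,1}^2=\lambda_{m,1}+c^2$. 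By Lemma \ref{lem1} this problem is uniquely solvable and $u_{m,1}(x)=\varphi_{m,1}C(\mu_{m,1}x)+\psi_{m,1}S(\mu_{m,1}x)$, which yields the stated series formally.

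**Convergence and regularity.** The substantive part is to show that the formal series actually defines a regular solution, i.e. that $u\in C(\bar\Omega)$ and that $D_x^\alpha u$, $D_x^{2\alpha}u$, $u_{yy}$ belong to the appropriate function classes, so that termwise differentiation is legitimate. Here I would combine two ingredients: (i) the uniform bounds $0\le C(\mu t),S(\mu t)\le 1$ from Lemma \ref{lem3}, which control the $x$-dependence uniformly; and (ii) decay estimates on the Fourier coefficients $\varphi_{m,1},\psi_{m,1}$ coming from the smoothness hypotheses $\varphi\in C^{2+\delta}$, $\psi\in C^{1+\delta}$ together with the boundary conditions $\varphi(\pm\pi)=\psi(\pm\pi)=0$. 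Integration by parts in the Fourier integrals (using the compatibility at $y=\pm\pi$ to kill boundary terms) gives $|\varphi_{m,1}|=O(k^{-2-\delta})$ and $|\psi_{m,1}|=O(k^{-1-\delta})$ along the two subsequences $m=2k-1$ and $m=2k$. One then needs analogous bounds for the series obtained after applying $D_x^{2\alpha}$ and $\partial_{yy}$: applying $D_x^{2\alpha}$ to $u_{m,1}$ produces $\mu_{m,1}^2 u_{m,1}$ by \eqref{2.1}, contributing an extra factor $\lambda_{m,1}+c^2=O(k^2)$, while $\partial_{yy}Y_{m,1}$ also contributes $O(k^2)$; thus the differentiated series are dominated by $\sum k^2(|\varphi_{m,1}|+|\psi_{m,1}|)$, which converges absolutely and uniformly because of the $O(k^{-2-\delta})$ and (after using the equation to trade a factor) the $O(k^{-1-\delta})$ decay. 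Care is needed for the $\psi$-part of the $D_x^{2\alpha}u$ series, where only $k^{-1-\delta}$ decay is available against a $k^2$ factor; here one uses that $S(\mu x)$ carries a factor $t^\alpha/E_{2\alpha,\alpha+1}(\mu^2)$ and, by the asymptotics \eqref{2.5}--\eqref{2.6} of Lemma \ref{lem2}, $E_{2\alpha,\alpha+1}(\mu^2)$ grows like $\exp(\mu^{1/\alpha})$, so $S(\mu_{m,1}x)$ and $\mu_{m,1}^2 S(\mu_{m,1}x)$ actually decay super-polynomially in $k$ away from $x=1$ — and a more delicate estimate near $x=1$ recovers continuity there. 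This Mittag--Leffler asymptotic estimate is the main technical obstacle.

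**Boundary conditions and uniqueness.** Once uniform convergence is established, continuity up to $\bar\Omega$ is immediate; the conditions at $y=\pm\pi$ hold because every $Y_{m,1}$ satisfies \eqref{3.2}, and the conditions at $x=0,1$ follow from $C(0)=1$, $S(0)=0$, $C(\mu)=0$, $S(\mu)=1$ together with the completeness of $\{Y_{m,1}\}$ in $L_2(-\pi,\pi)$ (Lemma \ref{lem4}), which gives $\sum \varphi_{m,1}Y_{m,1}(y)=\varphi(y)$ and likewise for $\psi$, the latter convergence being uniform by the Fourier-coefficient decay and absolute summability. For uniqueness, suppose $u$ is a regular solution with $\varphi\equiv\psi\equiv0$; form the Fourier coefficients $u_{m,1}(x)=(u(x,\cdot),Y_{m,1})$, differentiate under the integral sign (justified by regularity), and conclude that each $u_{m,1}$ solves \eqref{2.1}--\eqref{2.2} with $a=b=0$, hence $u_{m,1}\equiv0$ by the uniqueness part of Lemma \ref{lem1}; completeness of the system then forces $u\equiv0$. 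This completes the proof.
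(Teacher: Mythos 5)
Your proposal follows essentially the same route as the paper: expansion in the eigenbasis \eqref{3.6}, reduction to the one-dimensional problems \eqref{5.2}--\eqref{5.3} solved by Lemma \ref{lem1}, the bounds of Lemma \ref{lem3} plus the Fourier-coefficient decay $O(k^{-2-\delta})$, $O(k^{-1-\delta})$ for uniform convergence of \eqref{5.1}, the Mittag--Leffler asymptotics \eqref{2.5}--\eqref{2.6} giving $S(\mu_{2k-j,1}x)=O(e^{\mu_{2k-j,1}^{1/\alpha}(x-1)})$ to handle the differentiated series, and uniqueness via the coefficient problems. The only small remark is that your ``delicate estimate near $x=1$'' is not actually needed: a regular solution only requires $D_x^{2\alpha}u,\,u_{yy}\in C(\Omega)$, and the paper correspondingly proves interior convergence of the differentiated series (for $x$ bounded away from $1$) only.
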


\begin{theorem}\label{th2} Let $|\varepsilon | <
1,\,0 < \delta  < 1,$ $\varphi \left( y \right) \in C^{3 + \delta }
\left[ { - \pi ,\pi } \right],$ $\psi \left( y \right) \in C^{2 +
\delta } \left[ { - \pi ,\pi } \right]$ and $\varphi '\left( { -
\pi } \right) = \varphi '\left( \pi  \right) = 0,$ $\psi '\left( {
- \pi } \right) = \psi '\left( \pi  \right) = 0.$ Then the
solution of the problem N exists, is unique and it can be written
in the form \begin{multline*}u\left( {x,y} \right) = \left( {1 - x^\alpha  }
\right)\varphi _{0,2}  + x^\alpha  \psi _{0,2} \\+ \sum\limits_{k =
0}^\infty  {\left[ {\varphi _{2k - 1,2} C\left( {\mu _{2k - 1,2}
x} \right) + \psi _{2k - 1,2} S\left( {\mu _{2k - 1,2} x} \right)}
\right]} Y_{2k - 1,2}\\ +\sum\limits_{k = 1}^\infty
{\left[ {\varphi _{2k,2} C\left( {\mu _{2k,2} x} \right) + \psi
_{2k,2} S_{2k,2} \left( {\mu _{2k,2} x} \right)} \right]} Y_{2k,2}
.\end{multline*}\end{theorem}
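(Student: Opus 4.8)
The plan is to follow the Fourier method in the eigenbasis \eqref{3.7} adapted to the Neumann-type conditions at $y=\pm\pi$, exactly as one expects after Lemma \ref{lem4}. First I would expand the unknown solution as $u(x,y)=\sum_{k}u_{2k-1,2}(x)Y_{2k-1,2}(y)+\sum_{k}u_{2k,2}(x)Y_{2k,2}(y)$, substitute into \eqref{1.1}, and use that each $Y_{2k-j,2}$ satisfies \eqref{3.1} with eigenvalue $\lambda_{2k-j,2}$ to see that every coefficient $u_{2k-j,2}(x)$ must solve the one-dimensional problem $D_x^{2\alpha}u_{2k-j,2}-\mu_{2k-j,2}^2 u_{2k-j,2}=0$ on $S$, with $\mu_{2k-j,2}^2=\lambda_{2k-j,2}+c^2$, subject to $u_{2k-j,2}(0)=\varphi_{2k-j,2}$, $u_{2k-j,2}(1)=\psi_{2k-j,2}$. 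For the modes with $\lambda_{2k-j,2}>0$ Lemma \ref{lem1} gives $u_{2k-j,2}(x)=\varphi_{2k-j,2}C(\mu_{2k-j,2}x)+\psi_{2k-j,2}S(\mu_{2k-j,2}x)$. The one exceptional mode is $k=0$ in the cosine family, where $\lambda_{0,2}=0$; if $c=0$ the ODE degenerates to $D_x^{2\alpha}u_{0,2}=0$, whose solution with the given endpoint data is the affine-in-$x^\alpha$ function $(1-x^\alpha)\varphi_{0,2}+x^\alpha\psi_{0,2}$, which is precisely the first line of the claimed formula; if $c\neq0$ this term is subsumed into the general $C,S$ form with $\mu_{0,2}=|c|$. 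I would record this case distinction explicitly.

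Next I would prove uniqueness. Given two regular solutions, their difference $w$ has zero boundary data; pairing $w(x,\cdot)$ against each eigenfunction and using self-adjointness of the Sturm--Liouville problem \eqref{3.1},\eqref{3.3} shows each Fourier coefficient $w_{2k-j,2}(x)$ solves \eqref{2.1}--\eqref{2.2} with $a=b=0$, hence vanishes by the uniqueness half of Lemma \ref{lem1} (and, for the degenerate mode, by a direct integration of $D_x^{2\alpha}w_{0,2}=0$, or alternatively by the maximum principle of Lemma \ref{lem1*} after reducing \eqref{2.1} to \eqref{3*}). Completeness of the eigenbasis (Lemma \ref{lem4}) then forces $w\equiv0$ in $L_2(-\pi,\pi)$ for every $x$, and continuity upgrades this to $w\equiv0$ on $\bar\Omega$.

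The substantive part is existence, i.e. showing the series and the series obtained by applying $D_x^\alpha$, $D_x^{2\alpha}$, and $\partial_y^2$ converge uniformly on the appropriate sets, so that term-by-term differentiation is legitimate and $u$ is a regular solution. Here I would use: the smoothness hypotheses $\varphi\in C^{3+\delta}$, $\psi\in C^{2+\delta}$ together with the compatibility conditions $\varphi'(\pm\pi)=\psi'(\pm\pi)=0$, which after integration by parts give decay $|\varphi_{2k-j,2}|=O(k^{-3-\delta})$ and $|\psi_{2k-j,2}|=O(k^{-2-\delta})$; the uniform bounds $0\le C(\mu x),S(\mu x)\le 1$ from Lemma \ref{lem3}; and, crucially, estimates for $D_x^\alpha C(\mu_{2k-j,2}x)$, $D_x^{2\alpha}C$, $D_x^\alpha S$, $D_x^{2\alpha}S$ in terms of powers of $\mu_{2k-j,2}$. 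Computing these from \eqref{2.3}--\eqref{2.4} and using the asymptotics of the Mittag--Leffler function from Lemma \ref{lem2} (in particular \eqref{2.6} along $\arg z=\pi$, which controls the denominators $E_{2\alpha,\alpha+1}(\mu^2)$), one finds $D_x^{2\alpha}C$ and $D_x^{2\alpha}S$ grow like $\mu^2\sim k^2$, so the differentiated series are dominated by $\sum_k k^2(|\varphi_{2k-j,2}|+|\psi_{2k-j,2}|)=\sum_k O(k^{-\delta})$ — wait, that only just fails, so in fact one needs the sharper $O(k^{-1-\delta})$ bound coming from the full $C^{3+\delta}$/$C^{2+\delta}$ hypothesis (three, resp. two, integrations by parts) to get a convergent majorant $\sum_k O(k^{-1-\delta})$. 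This convergence bookkeeping, and the careful verification that the $D_x^{2\alpha}$ series converges only on the open strip $0<x<1$ (matching the definition of regular solution) while the $u$ and $D_x^\alpha u$ series converge up to $x=0,1$, is the main obstacle; everything else is routine. I would also note that the boundary conditions at $y=\pm\pi$ and at $x=0,1$ are recovered from the uniformly convergent series by construction, and that the case $c\neq0$ requires checking $\mu_{2k-j,2}^2=\lambda_{2k-j,2}+c^2>0$ so that Lemma \ref{lem1} applies to every mode.
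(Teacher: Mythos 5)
Your overall route is the same as the paper's: expand in the eigenbasis \eqref{3.7} of the perturbed Sturm--Liouville problem \eqref{3.1}, \eqref{3.3} (Lemma \ref{lem4}), reduce each Fourier coefficient to the one-dimensional problem \eqref{2.1}--\eqref{2.2} with $\mu_{2k-j,2}^2=\lambda_{2k-j,2}+c^2$, invoke Lemma \ref{lem1} for the explicit $C,S$ representation, and get uniqueness from uniqueness of the one-dimensional problem plus completeness; the paper writes this out only for problem D and declares the remaining cases analogous. Your explicit treatment of the exceptional mode $k=0$ (where $\lambda_{0,2}=0$, so the affine term $(1-x^\alpha)\varphi_{0,2}+x^\alpha\psi_{0,2}$ is the correct coefficient only when $\mu_{0,2}=|c|=0$, while for $c\neq 0$ it should be $\varphi_{0,2}C(|c|x)+\psi_{0,2}S(|c|x)$) is in fact more careful than the paper, which states the formula without this case distinction.

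There is, however, a concrete flaw in your existence bookkeeping. From $\psi\in C^{2+\delta}[-\pi,\pi]$ with $\psi'(\pm\pi)=0$ you can only get $|\psi_{2k-j,2}|=O(k^{-2-\delta})$; hence $\lambda_{2k-j,2}\,|\psi_{2k-j,2}|=O(k^{-\delta})$, and the ``sharper convergent majorant $\sum_k O(k^{-1-\delta})$'' you appeal to simply is not available for the $\psi$-terms under the hypotheses of the theorem --- no number of integrations by parts of a $C^{2+\delta}$ function yields $O(k^{-3-\delta})$. The paper closes exactly this point (in the problem-D proof, which transfers verbatim) not by extra coefficient decay but by the Mittag--Leffler asymptotics \eqref{2.5}--\eqref{2.6} applied to \eqref{2.3}--\eqref{2.4}: $C(\mu_{2k-j,2}x)=O(1/\mu_{2k-j,2})$ and $S(\mu_{2k-j,2}x)=O\bigl(e^{\mu_{2k-j,2}^{1/\alpha}(x-1)}\bigr)$, so in the series for $u_{yy}$ and $D_x^{2\alpha}u=\sum\mu^2_{2k-j,2}u_{2k-j,2}Y_{2k-j,2}$ the $\psi$-part is majorized by terms of the form $k^{-\delta}e^{\mu_{2k-j,2}^{1/\alpha}(x-1)}$, which converge uniformly on every strip $x\le 1-\epsilon$; this local (interior) convergence suffices because a regular solution only requires $u_{yy},D_x^{2\alpha}u\in C(\Omega)$, while the undifferentiated series converges uniformly on $\bar\Omega$ by Lemma \ref{lem3} and the coefficient decay. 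You gesture at the open-strip issue at the end, but as written the quantitative step justifying term-by-term differentiation fails; replace the erroneous $O(k^{-1-\delta})$ claim with the exponential interior estimate for $S$ (and the $1/\mu$ gain for $C$), and the proof matches the paper's.
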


\begin{theorem}\label{th3} Let $|\varepsilon | < 1,\,0 < \delta  <
1,$ $\varphi \left( y \right) \in C^{3 + \delta } \left[ { - \pi
,\pi } \right],$ $\psi \left( y \right) \in C^{2 + \delta }
\left[ { - \pi ,\pi } \right]$ and $\varphi \left( { - \pi }
\right) = \varphi \left( \pi  \right),\,\varphi '\left( { - \pi }
\right) = \varphi '\left( \pi  \right),$ $\psi \left( { - \pi }
\right) = \psi \left( \pi  \right),\,\psi '\left( { - \pi } \right)
= \psi '\left( \pi  \right).$ Then the solution of the problem P
exists, is unique and it can be written in the form \begin{multline*}u\left(
{x,y} \right) = \left( {1 - x^\alpha  } \right)\varphi _{0,3} + x^\alpha  \psi _{0,3}\\ +
\sum\limits_{k = 1}^\infty  {\left[ {\varphi _{2k,3} C\left( {\mu
_{2k,3} x} \right) + \psi _{2k,3} S\left( {\mu _{2k,3} x} \right)}
\right]} Y_{2k,3} \left( y \right) \\ + \sum\limits_{k = 1}^\infty  {\left[ {\varphi _{2k - 1,3} C\left(
{\mu _{2k - 1,3} x} \right) + \psi _{2k - 1,3} S\left( {\mu _{2k -
1,3} x} \right)} \right]} Y_{2k - 1,3} \left( y \right).\end{multline*}\end{theorem}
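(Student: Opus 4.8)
The plan is to carry out the Fourier method in the $y$-variable using the orthonormal eigenbasis \eqref{3.8}, reduce the PDE to a family of one-dimensional problems of the type \eqref{2.1}--\eqref{2.2} (already solved in Lemma \ref{lem1}), and then prove that the resulting series is a regular solution by means of the uniform bounds $0\le C(\mu t),S(\mu t)\le 1$ from Lemma \ref{lem3} together with decay of the Fourier coefficients of $\varphi,\psi$. Uniqueness will follow from completeness of the eigensystem.

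First I would expand a hypothetical solution as $u(x,y)=\sum_{k} u_{k}(x)\,Y_{k,3}(y)$ over the system \eqref{3.8}. Substituting into \eqref{1.1} and using that $Y_{k,3}$ is an eigenfunction of $Y''(y)-\varepsilon Y''(-y)$ with eigenvalue $-\lambda_{k,3}$, each coefficient $u_k$ must satisfy $L_x^{2\alpha}u_k(x)-(\lambda_{k,3}+c^2)u_k(x)=0$, i.e. \eqref{2.1} with $\mu_{k,3}^2=\lambda_{k,3}+c^2$, subject to $u_k(0)=\varphi_{k,3}$, $u_k(1)=\psi_{k,3}$ from \eqref{1.2}. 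For $\lambda_{k,3}+c^2>0$ Lemma \ref{lem1} gives $u_k(x)=\varphi_{k,3}C(\mu_{k,3}x)+\psi_{k,3}S(\mu_{k,3}x)$; the exceptional index is $k$ with $\lambda_{0,3}=0$, where if also $c=0$ the equation degenerates to $D_x^{2\alpha}u_0=0$ and the solution compatible with $u_0(0)=\varphi_{0,3}$, $u_0(1)=\psi_{0,3}$ is the affine-in-$x^\alpha$ term $(1-x^\alpha)\varphi_{0,3}+x^\alpha\psi_{0,3}$, which explains the first two terms in the claimed formula. (If $c\ne0$ this term is absorbed into the $k$-sum via $C,S$ with $\mu=|c|$, and the statement should be read accordingly.) Assembling these gives exactly the asserted series.

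Next I would verify convergence and regularity. Using $|\varepsilon|<1$ one has $\lambda_{k,3}\asymp k^2$, so $\mu_{k,3}\asymp k$. Integrating by parts and using the periodic boundary data $\varphi(-\pi)=\varphi(\pi)$, $\varphi'(-\pi)=\varphi'(\pi)$ (and likewise for $\psi$), the hypotheses $\varphi\in C^{3+\delta}$, $\psi\in C^{2+\delta}$ yield $\varphi_{k,3}=O(k^{-3-\delta})$ and $\psi_{k,3}=O(k^{-2-\delta})$. Since $0\le C(\mu_{k,3}x),S(\mu_{k,3}x)\le1$ by Lemma \ref{lem3} and $|Y_{k,3}|\le1$, the series for $u$ converges absolutely and uniformly on $\bar\Omega$, so $u\in C(\bar\Omega)$. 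For $u_{yy}$, differentiating $Y_{k,3}$ twice brings a factor $O(k^2)$, and the corresponding series is dominated termwise by $C\sum_k k^2(|\varphi_{k,3}|+|\psi_{k,3}|)<\infty$; hence $u_{yy}\in C(\Omega)$ and, with two $y$-derivatives of the data controlled, $u_y\in C(\bar\Omega)$. The $x$-regularity $D_x^\alpha u,D_x^{2\alpha}u\in C(\Omega)$ is obtained the same way after noting that $D_x^\alpha$ and $D_x^{2\alpha}$ act on $C(\mu_{k,3}x),S(\mu_{k,3}x)$ through Mittag-Leffler functions whose relevant combinations are again uniformly bounded (on compact $x$-subsets, via \eqref{2.5}--\eqref{2.6}), so that term-by-term fractional differentiation in $x$ is justified on $\Omega$. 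That the boundary conditions \eqref{1.2} and the periodicity conditions hold is then read off termwise, using the structure \eqref{3.8} of $Y_{k,3}$.

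Finally, uniqueness: if $u$ is a regular solution with zero data, then for each $k$ the function $u_k(x)=(u(x,\cdot),Y_{k,3})$ solves \eqref{2.1}--\eqref{2.2} with $a=b=0$, so $u_k\equiv0$ by the uniqueness part of Lemma \ref{lem1} (and $u_0\equiv0$ directly when $c=0$); completeness of $\{Y_{k,3}\}$ in $L_2(-\pi,\pi)$ (Lemma \ref{lem4}) then forces $u\equiv0$. The main obstacle I anticipate is the $x$-regularity estimate: one must check carefully that the termwise fractional $x$-derivatives of $C(\mu_{k,3}x)$ and $S(\mu_{k,3}x)$ do not grow faster in $k$ than the decay of $\varphi_{k,3},\psi_{k,3}$ can absorb, which requires the asymptotics of Lemma \ref{lem2} rather than just the crude bound of Lemma \ref{lem3}; handling the point $x=0$, where the Caputo derivative's kernel is singular, needs the degenerate-index term to be treated separately as above.
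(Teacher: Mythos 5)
Your proposal is correct and follows essentially the same route as the paper: Fourier expansion in the eigenbasis \eqref{3.8}, reduction to the one-dimensional problems \eqref{2.1}--\eqref{2.2} solved by Lemma \ref{lem1}, convergence and regularity via Lemma \ref{lem3}, the coefficient decay estimates, and the asymptotics \eqref{2.5}--\eqref{2.6}, with uniqueness from Lemma \ref{lem4} and the uniqueness part of Lemma \ref{lem1} (this is exactly the scheme the paper carries out for Problem D and then invokes for Problem P). Your separate treatment of the degenerate zero mode $\lambda_{0,3}=0$, including the remark that the term $(1-x^\alpha)\varphi_{0,3}+x^\alpha\psi_{0,3}$ strictly corresponds to $c=0$ while for $c\neq 0$ that mode is given by $C(|c|x),S(|c|x)$, is a careful point the paper's statement glosses over.
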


\begin{theorem}\label{th4} Let $|\varepsilon | < 1,\,0 < \delta  < 1,$ 
$\varphi \left( y \right) \in C^{3 + \delta } \left[ { - \pi ,\pi
} \right],$ $\psi \left( y \right) \in C^{2 + \delta } \left[ { -
\pi ,\pi } \right]$ and $\varphi \left( { - \pi } \right) =  -
\varphi \left( \pi  \right),\,\varphi '\left( { - \pi } \right) =  -
\varphi '\left( \pi  \right),$ $\psi \left( { - \pi } \right) =  -
\psi \left( \pi  \right),\,\psi '\left( { - \pi } \right) =  - \psi
'\left( \pi  \right).$ Then the solution of the problem AP exists,
is unique and it can be written in the form \begin{multline*}u\left( {x,y}
\right) = \sum\limits_{k = 0}^\infty  {\left[ {\varphi _{2k,4}
C\left( {\mu _{2k,4} x} \right) + \psi _{2k,4} S\left( {\mu
_{2k,4} x} \right)} \right]} Y_{2k,4} \left( y \right)\\ +
\sum\limits_{k = 0}^\infty  {\left[ {\varphi _{2k - 1,4} C\left(
{\mu _{2k - 1,4} x} \right) + \psi _{2k - 1,4} S\left( {\mu _{2k -
1,4} x} \right)} \right]} Y_{2k - 1,4} \left( y \right).\end{multline*}\end{theorem}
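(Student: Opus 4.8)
The plan is to apply the Fourier method with the eigensystem \eqref{3.9} of the self-adjoint problem \eqref{3.1}, \eqref{3.5}, which by Lemma \ref{lem4} is a complete orthonormal basis of $L_2(-\pi,\pi)$. Writing $u(x,y)=\sum_n c_n(x)Y_n(y)$ with $n$ ranging over the two families in \eqref{3.9} and inserting this into \eqref{1.1}, the relation $Y_n''(y)-\varepsilon Y_n''(-y)=-\lambda_n Y_n(y)$ (which is precisely \eqref{3.1}) decouples the equation into the one-dimensional problems $D_x^{2\alpha}c_n-\mu_n^2 c_n=0$ with $\mu_n^2=\lambda_n+c^2$, while \eqref{1.2} and orthonormality give $c_n(0)=\varphi_n$, $c_n(1)=\psi_n$. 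Because $|\varepsilon|<1$, every eigenvalue $\lambda_{2k,4},\lambda_{2k-1,4}$ is strictly positive, so $\mu_n^2>0$ for all $n$ and there is no exceptional zero mode — this is why, in contrast to Theorems \ref{th2} and \ref{th3}, no separate $x^\alpha$-term occurs. Lemma \ref{lem1} then applies for each $n$ and yields $c_n(x)=\varphi_n C(\mu_n x)+\psi_n S(\mu_n x)$, giving the stated series.

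I would then verify the boundary conditions and convergence. Integrating by parts and using $\varphi(-\pi)=-\varphi(\pi)$, $\varphi'(-\pi)=-\varphi'(\pi)$, $\psi(-\pi)=-\psi(\pi)$, $\psi'(-\pi)=-\psi'(\pi)$ together with $\sin((k+\frac{1}{2})\pi)=(-1)^k$, $\cos((k+\frac{1}{2})\pi)=0$ (which make the boundary terms of the antiperiodic eigenfunctions cancel), and estimating the remaining oscillatory integral of a Hölder-continuous function by the standard shift argument, I would obtain $|\varphi_n|=O(k^{-3})$ and $|\psi_n|=O(k^{-2-\delta})$; the Hölder exponent $\delta$ is used exactly here, to secure $\sum_n k|\psi_n|<\infty$. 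Together with $0\le C(\mu_n x),S(\mu_n x)\le 1$ (Lemma \ref{lem3}), $|Y_n|\le1$ and $|Y_n'|\le k+\frac{1}{2}$, this gives absolute and uniform convergence on $\bar\Omega$ of the series for $u$ and for $u_y$, hence $u\in C(\bar\Omega)$, $u_y\in C(\bar\Omega)$. Since $C(\mu_n x),S(\mu_n x)$ equal $1,0$ at $x=0$ and $0,1$ at $x=1$, and $\sum_n\varphi_n Y_n=\varphi$, $\sum_n\psi_n Y_n=\psi$ by completeness of \eqref{3.9}, the conditions \eqref{1.2} follow; and because each $Y_{2k-j,4}$ satisfies $Y(-\pi)=-Y(\pi)$, $Y'(-\pi)=-Y'(\pi)$, these pass to $u$ and $u_y$, giving the antiperiodic conditions.

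To show $u$ is a regular solution it remains to check $D_x^\alpha u,D_x^{2\alpha}u,u_{yy}\in C(\Omega)$. On a compact subinterval $[\eta,1-\eta]\subset(0,1)$, the asymptotics of Lemma \ref{lem2} applied to \eqref{2.3}, \eqref{2.4} and to the Mittag--Leffler representations of $D_x^\alpha C(\mu_n x)$, $D_x^\alpha S(\mu_n x)$ give, uniformly in $x$, $S(\mu_n x)$ and $D_x^\alpha S(\mu_n x)=O(e^{-\eta\mu_n^{1/\alpha}})$ while $C(\mu_n x)=O(\mu_n^{-1})$, $D_x^\alpha C(\mu_n x)=O(1)$; since $D_x^{2\alpha}c_n=\mu_n^2 c_n$ and $|Y_n''|\le Ck^2$, the general terms of the three series are $O(k|\varphi_n|)+O(e^{-\eta\mu_n^{1/\alpha}})$ on $[\eta,1-\eta]\times[-\pi,\pi]$, so they converge uniformly on compacta of $\Omega$. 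Term-by-term differentiation is thereby justified on $\Omega$, and substitution into \eqref{1.1} gives $0$ because $\mu_n^2-\lambda_n-c^2=0$ for all $n$.

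For uniqueness I would take $u$ solving the homogeneous problem ($\varphi\equiv\psi\equiv0$), set $c_n(x)=\int_{-\pi}^\pi u(x,y)Y_n(y)\,dy$, and use the regularity of $u$ to differentiate under the integral and integrate by parts twice in $y$, the boundary terms vanishing by antiperiodicity of $u$ and $Y_n$ (this also encodes the self-adjointness needed to transfer $Y_n''(y)-\varepsilon Y_n''(-y)$ onto $u$); then $c_n$ solves $D_x^{2\alpha}c_n-\mu_n^2 c_n=0$ with $c_n(0)=c_n(1)=0$ in the class of Lemma \ref{lem1}, so $c_n\equiv0$ for every $n$, and completeness of \eqref{3.9} forces $u(x,\cdot)=0$ in $L_2(-\pi,\pi)$ for each $x$, hence $u\equiv0$ by continuity. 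The main obstacle is the uniform control of the Mittag--Leffler quotients in \eqref{2.3}, \eqref{2.4} and of $D_x^\alpha C$, $D_x^\alpha S$ as $\mu_n\to\infty$ with $x$ in a compact subinterval of $(0,1)$ — including the mild $x^{-\alpha}$-type loss as $x\to0^+$ — which is what decides that $D_x^{2\alpha}u$ and $u_{yy}$ converge only on $\Omega$, not on $\bar\Omega$, and what legitimizes the term-by-term fractional differentiation.
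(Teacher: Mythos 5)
Your proposal is correct and follows essentially the same route as the paper: expansion in the orthonormal eigensystem \eqref{3.9}, reduction to the one-dimensional problem solved by Lemma \ref{lem1}, coefficient decay from the smoothness and antiperiodicity hypotheses, and convergence of $u$, $u_y$, $u_{yy}$, $D_x^{2\alpha}u$ via Lemmas \ref{lem2} and \ref{lem3}, with uniqueness obtained by projecting a homogeneous solution onto the eigenfunctions. The paper writes out this argument only for problem D and declares the other cases analogous; your adaptation to problem AP is faithful and even makes explicit points the paper leaves implicit, such as the strict positivity of all $\lambda_{2k-j,4}$ for $|\varepsilon|<1$ (hence no $x^\alpha$ zero-mode term) and the integration-by-parts/self-adjointness step in the uniqueness argument.
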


\section{Proofs of the main results}

As the proofs for the uniqueness of the solutions of each problems
are similar, we will present only the proof for problem D.

As the system of eigenfunctions \eqref{3.6} of problem D forms an orthonormal basis in $L_2( - \pi ,\pi ),$ the function can be represented as follows \begin{equation}\label{5.1} u\left( {x,y} \right) = \sum\limits_{k = 1}^\infty  {u_{2k - 1,1} \left( x \right)Y_{2k - 1,1} \left( y \right)}  + \sum\limits_{k = 1}^\infty  {u_{2k,1} \left( x \right)Y_{2k,1} \left( y \right)},\end{equation} where $u_{2k - 1,1} (x),u_{2k,1} (x)$ are unknown coefficients. It is well known that if $\varphi \left( y \right),\psi \left( y \right)$ satisfy the conditions of Theorem \ref{th1}, then they can be uniquely represented in the form of a uniformly and absolutely convergent Fourier series by the systems $\left\{ {Y_{2k - 1,1} (y),Y_{2k,1} (y)} \right\}:$ $$\varphi \left( y \right) = \sum\limits_{k = 1}^\infty  {\varphi _{2k - 1,1} Y_{2k - 1,1}^{} \left( y \right)}  + \sum\limits_{k = 1}^\infty  {\varphi _{2k,1} Y_{2k,1}^{} \left( y \right)},$$ $$\psi \left( y \right) = \sum\limits_{k = 1}^\infty  {\psi _{2k - 1,1} Y_{2k - 1,1}^{} \left( y \right)}  + \sum\limits_{k = 1}^\infty  {\psi _{2k,1} Y_{2k,1}^{} \left( y \right)},$$ where $\varphi _{2k - j,1}  = \left( {\varphi ,Y_{2k - j,1}^{} } \right),$ $\psi _{2k - j,1}  = \left( {\psi ,Y_{2k - j,1}^{} } \right),j = 0,1.$\\
Putting \eqref{5.1} into Equation \eqref{1.1} and boundary
conditions \eqref{1.2}, for finding unknown functions $u_k \left(
x \right)$, we obtain the following problem
\begin{equation}\label{5.2}D_y^{2\alpha } u_{2k - j,1} \left( x
\right) - \mu _{2k - j,1}^2 u_{2k - j,1} \left( x \right) = 0,0 <
x < 1,\end{equation} \begin{equation}\label{5.3}u_{2k - j,1}
\left( 0 \right) = \varphi _{2k - j,1},\quad u_{2k - j,1} \left( 1
\right) = \psi _{2k - j,1} ,\end{equation} where $\mu _{2k -
j,1}^2  = \lambda _{2k - j,1}^{}  + c^2,$ $j = 0,1.$

Due to Lemma \ref{lem1} the solution of problem \eqref{5.2}-\eqref{5.3}
exists, is unique and it can be written in the form $$u_{2k - j,1}
\left( x \right) = \varphi _{2k - j,1} C\left( {\mu _{2k - j,1} x}
\right) + \psi _{2k - j,1} S\left( {\mu _{2k - j,1} x} \right),$$
where $C\left( {\mu _{2k - j,1} x} \right)$ and $S\left( {\mu _{2k
- j,1} x} \right)$ are defined by \eqref{2.3} and \eqref{2.4},
respectively. Furthermore, according to Lemma \ref{lem3}  inequalities $$0
\le S\left( {\mu _{2k - j,1} x} \right),C\left( {\mu _{2k - j,1}
x} \right) \le 1,x \in \left[ {0,1} \right]$$ are true.

Further, if the function $f\left( x \right)$ belongs to the class
$C^{m + \delta } \left[ {a,b} \right],$ $m = 0,1,...,0 < \delta  <
1,$ then for Fourier coefficients of this function the following
estimation holds (see.\cite{IlinPoznyak:2002}): $$\left| {f_k }
\right| = O\left( {\frac{1}{{k^{m + \delta } }}} \right),k \to
\infty.$$ If $\varphi ''\left( y \right) \in C^\delta  \left[ { -
\pi ,\pi } \right],$ $\psi '\left( y \right) \in C^\delta  \left[
{ - \pi ,\pi } \right]$ and conditions $\varphi \left( { - \pi }
\right) = \varphi \left( \pi  \right) = \psi \left( { - \pi }
\right) = \psi \left( \pi  \right) = 0$ hold, then $$\left|
{\varphi _{2k - 1,1} } \right| \le \frac{C}{{k^{2 + \delta }
}},\left| {\varphi _{2k,1} } \right| \le \frac{C}{{\left( {k -
\frac{1}{2}} \right)^{2 + \delta } }},$$ $$\left| {\psi _{2k -
1,1} } \right| \le \frac{C}{{k^{1 + \delta } }},\left| {\psi
_{2k,1} } \right| \le \frac{C}{{\left( {k - \frac{1}{2}}
\right)^{1 + \delta } }},\quad C = const.$$ For such functions,
we obtain \begin{gather}\begin{gathered}\left| {u_{2k - 1,1} \left( x
\right)} \right| \le C\left( {\frac{1}{{k^{2 + \delta } }} +
\frac{1}{{k^{1 + \delta } }}} \right),\\ \left| {u_{2k - 1,1}
\left( x \right)} \right| \le C\left( {\frac{1}{{\left( {k -
\frac{1}{2}} \right)^{2 + \delta } }} + \frac{1}{{\left( {k -
\frac{1}{2}} \right)^{1 + \delta } }}} \right).\label{5.4}\end{gathered}\end{gather}
Then the series \eqref{5.1} converges uniformly in the domain
$\bar \Omega$ and therefore $u\left( {x,y} \right) \in C\left(
{\bar \Omega } \right).$   Further, using estimations \eqref{2.5}
and \eqref{2.6}, we get $$S_{2k - j,1} \left( {\mu _{2k - j,1} x}
\right) = O\left( {e^{\mu _{2k - j,1}^{\frac{1}{\alpha }} \left(
{x - 1} \right)} } \right),$$ $$C\left( {\mu _{2k - j,1} x}
\right) = O\left( {\frac{1}{{\mu _{2k - j,1} }}} \right).$$ Taking
derivative term by term from the series \eqref{5.1} twice by $y$,
we have
$$u_{yy} \left( {x,y} \right) =  - \sum\limits_{k = 1}^\infty  {\lambda _{2k - 1,1} u_{2k - 1,1} \left( x \right)Y_{2k - 1,1} \left( y \right)}  - \sum\limits_{k = 1}^\infty  {\lambda _{2k,1} u_{2k,1} \left( x \right)Y_{2k,1} \left( y \right)} .$$ Then for all $x \ge x_0  > 0,$ $0 \le y \le 1,$ taking into account inequalities \eqref{5.4}, we have \begin{multline*}\left| {u_{yy} \left( {x,y} \right)} \right| \le C\sum\limits_{k = 1}^\infty  {\left( {\left| {\lambda _{2k - 1,1}^{} } \right|\left| {u_{2k - 1,1} \left( x \right)} \right| + \left| {\lambda _{2k,1}^{} } \right|\left| {u_{2k,1} \left( x \right)} \right|} \right)}\\ \le C\sum\limits_{k = 1}^\infty  {\left( {\left| {\lambda _{2k - 1,1}^{} } \right|\left| {u_{2k - 1,1} \left( x \right)} \right| + \left| {\lambda _{2k,1}^{} } \right|\left| {u_{2k,1} \left( x \right)} \right|} \right)}\\ \le C\sum\limits_{k = 1}^\infty  {k^{ - 1 - \delta }  + k^{1 - \delta } e^{ - \mu _{2k - 1,1} \left( {1 - x} \right)}  + \left( {k - \frac{1}{2}} \right)^{ - 1 - \delta } }  \\+ C\sum\limits_{k = 1}^\infty  {\left( {k - \frac{1}{2}} \right)^{1 - \delta } e^{ - \mu _{2k,1} \left( {1 - x} \right)}  < \infty } .\end{multline*} Similarly, estimate the series $$D_x^{2\alpha } u\left( {x,y} \right) = \sum\limits_{k = 1}^\infty  {\left( {\mu _{2k - 1,1}^2 u_{2k - 1,1} \left( x \right)Y_{2k - 1,1} \left( y \right) + \mu _{2k,1}^2 u_{2k,1} \left( x \right)Y_{2k,1} \left( y \right)} \right)} .$$ Then $u_{yy} \left( {x,y} \right),D_x^{2\alpha } u\left( {x,y} \right) \in C\left( \Omega  \right).$\\
The uniqueness of the solution of problem D follows from the
uniqueness of the solution of problem \eqref{5.2} - \eqref{5.3}.
The theorem is proved.

\section{Acknowledgement}
The final version of this paper was completed when Berikbol T. Torebek was visiting the University of La Rochelle (France).
The research of Turmetov and Torebek is financially supported by a grant from the
Ministry of Science and Education of the Republic of Kazakhstan
(Grants No. 0819/GF4). The research of Kirane is supported by NAAM research group, University of King Abdulaziz, Jeddah.

\end{document}